\newcommand{\CC}{\mathbb C}
\newcommand{\OO}{\mathbb O}
\newcommand{\PP}{\mathbb P}
\newcommand{\ZZ}{\mathbb Z}
\newcommand{\QQ}{\mathbb Q}
\newcommand{\lam}{\lambda}
\newcommand{\Lam}{\Lambda}
\newcommand{\al}{\alpha}
\newcommand{\be}{\beta}
\newcommand{\la}{\lambda}
\newcommand{\nab}{\nabla}
\newcommand{\om}{\omega}
\newcommand{\Si}{\Sigma}
\newcommand{\Oh}{\mathcal O}
\newcommand{\sE}{\mathcal E}
\newcommand{\Cc}{\mathcal C}
\newcommand{\sS}{\mathcal S}
\newcommand{\sQ}{\mathcal Q}
\newcommand{\into}{\hookrightarrow}
\newcommand{\color}[6]{}
 \newtheorem{theorem}[subsection]{Theorem}
 \newtheorem{prop}[subsection]{Proposition}
\newtheorem{example}[subsection]{Example}
 \newtheorem{nothing}[subsection]{}
 \newenvironment{proof}{\paragraph{Proof}}{\par\medskip}
\DeclareMathOperator{\GL}{GL}
\DeclareMathOperator{\Gr}{Gr }
\numberwithin{equation}{section}
\begin{document}

\title{Calabi--Yau threefolds in weighted flag varieties}
\date{}
\author{Muhammad Imran Qureshi and Bal{\'a}zs Szendr{\H{o}}i}

\maketitle
\begin{abstract} We review the construction of families of projective varieties, 
in particular Cala\-bi--Yau threefolds, as quasilinear sections in weighted flag varieties. 
We also describe a construction of tautological orbi-bundles on these varieties, which may be of interest in heterotic model building. 
\end{abstract}

\section{Introduction} 

The classical flag varieties $\Sigma=G/P$ are projective varieties which are 
homogeneous spaces under complex reductive Lie groups~$G$; the stabilizer $P$ 
of a point in $\Sigma$ is a parabolic subgroup $P$ of $G$. The simplest example
is projective space $\PP^n$ itself, which is a homogeneous space under the
complex Lie group $\GL(n)$. Weighted flag varieties $w\Sigma$, which are the
analogues of weighted projective space in this more general context,
were defined by Grojnowski and Corti--Reid~\cite{wg}.
They admit a Pl\"ucker-style embedding
\[w\Sigma\subset\PP[w_0, \cdots , w_n]\]
into a weighted projective space. In this paper, we review the construction of
Calabi--Yau threefolds~$X$ that arise as complete intersections within~$w\Sigma$ 
of some hypersurfaces of weighted projective space~\cite{wg, qs, Q}:
\[X\subset w\Sigma\subset\PP[w_0, \cdots , w_n].\]
To be more precise, our examples are going to be quasi-linear sections in~$w\Sigma$, 
where the degree of each equation agrees with one of the~$w_i$. The varieties $X$ will have
standard threefold singularities similar to complete intersections in weighted projective
spaces; they have crepant desingularizations~$Y\to X$ by standard theory.

We start by computing the Hilbert series of a weighted flag variety $w\Sigma$ of a given type. 
By numerical considerations, we get candidate degrees for possible Calabi--Yau complete 
intersection families. To prove the existence of a particular family, in particular 
to check that general members of the family only have mild quotient singularities, 
we need equations for the Pl\"ucker style embedding. It turns out that the equations of~$w\Sigma$ 
in the weighted projective space, which are the same as the equations of the straight flag variety
$\Sigma$ in its natural embedding, can be computed relatively easily using computer algebra~\cite{qs}. 

The smooth Calabi--Yau models $Y$ that arise from this method may be new, though it is probably 
difficult to tell. One problem we do know not treat in general is the determination of topological 
invariants such as Betti and Hodge numbers of~$Y$. Some Hodge number calculations for
varieties constructed using a related method are performed in~\cite{anita}, via
explicit birational maps to complete intersections in weighted projective spaces; the Hodge numbers
of such varieties can be computed by standard methods. Such maps are hard to construct in general. 
A better route would be to first compute the Hodge structure of~$w\Sigma$, then deduce 
the invariants of their quasi-linear sections~$X$ and finally their resolutions~$Y$. 
See for example~\cite{baty_cox} for analogous work for hypersurfaces in toric varieties. 
We leave the development of such an approach for future work.

We conclude our paper with the outline of a possible application of our construction: by its 
definition, the weighted flag variety $w\Sigma$ and thus its quasi-linear section $X$ carries 
natural orbi-bundles; these are the analogues of $\Oh(1)$ on (weighted) projective space. 
It is possible that these can be used to construct interesting bundles on the resolution~$Y$ 
which may be relevant in heterotic compactifications. Again, we have no conclusive results. 

\subsection*{Acknowledgements} 
The first author has been supported by a grant from the Higher Education 
Commission (HEC) of Pakistan.

\section{Weighted flag varieties}\label{hssec}

\subsection{The main definition}
We start by recalling the notion of weighted flag variety due to Grojnowski and 
Corti--Reid~\cite{wg}. 
Fix a reductive Lie group $G$ and a highest weight \(\lam \in \Lam_W \), where $\Lam_W$ is
the  weight lattice or lattice of characters of \(G\). Then we have a corresponding parabolic 
subgroup \(P_\la\), well-defined up to conjugation. The quotient  $\Sigma=G/P_\lam$ is a homogeneous
variety called {\em (generalized) flag variety}. 

Let \(\Lam_W^*\) denote the lattice of one parameter subgroups, 
dual to the weight lattice \(\Lam_W\). Choose \(\mu \in \Lam_W^* \) and an integer \(u \in\ZZ\) such 
that \begin{equation}
<w\lam,\mu>+u >0
\label {weights} 
\end{equation}
for all  elements \(w\) of the Weyl group of the Lie group \(G\), where $<,>$ denotes the perfect 
pairing between $\Lam_W$ and $\Lam_W^*$.

Consider the affine cone $\widetilde{\Sigma} \subset  V_{\lambda}$ of the 
embedding $\Sigma \hookrightarrow \PP V_{\lambda}$. There is a $\CC^*$-action on $V_{\lambda}\setminus\{0\}$ 
given by
\[ 
(\varepsilon \in \CC^*) \mapsto ( v \mapsto \varepsilon^u(\mu(\varepsilon)\circ v))
\]
which induces an action on $\widetilde{\Sigma}$. 
The inequality (\(\ref{weights}\)) ensures that all the $\CC^*$-weights on $V_{\lam}$ are positive, 
leading to a well-defined quotient 
\[ w\PP V_{\lam} = V_{\lam} \setminus\{0\} \Big/ \CC^*, \] 
a weighted projective space, and inside it the projective quotient 
\[ w\Sigma = \widetilde{\Sigma} \setminus\{0\} \Big/\CC^* \subset w\PP V_{\lam}.\]
We call $w\Sigma$ a {\em weighted flag variety}.
By definition, \(w \Si\) quasismooth, i.e. its affine cone \(\widetilde \Si\) is nonsingular 
outside its vertex \(\underline 0\). Hence it only has finite quotient singularities. 

The weighted flag variety \(w\Si \) is called {\em well-formed}~\cite{fl}, if no $(n-1)$ 
of weights \(w_i\) have a common factor, and moreover \(w\Si\) does not contain any 
codimension \(c+1\) singular stratum of \(w\PP V_\la\), where \(c\) is the codimension of \(w\Si\). 

\subsection{The Hilbert series of a weighted flag variety}

Consider the embedding \(w\Si \subset w\PP V_{\lam}\). The restriction of the line (orbi)bundle 
of degree one Weil divisors \(\Oh_{w\PP V_{\lam}}(1)\) gives a polarization \(\Oh_{w\Si}(1)\) on 
$w\Sigma$, a \(\QQ\)-ample line orbibundle some tensor power of which is a very ample line bundle. 
Powers of \(\Oh_{w\Sigma}(1)\) have well-defined spaces of sections 
$H^0(w \Sigma, \mathcal{O}_{w\Sigma}(m))$. 
The {\em Hilbert series} of the pair $(w\Si, \Oh_{w\Si}(1))$ is the power series given by
\[
P_{w\Si}(t)= \sum_{m \geq 0} \dim H^0(w \Sigma, \mathcal{O}_{w\Sigma}(m)) t^m.
\]
\begin{theorem}{\rm\cite[Thm. 3.1]{qs}} The Hilbert series $P_{w\Si}(t)$ has the closed form
\begin{equation}
P_{w\Si}(t)=\dfrac{\sum_{w\in W}(-1)^w \dfrac{t^{<w\rho, \mu>}}{(1- t^{<w\lambda,\mu>+u})}}{\sum_{w\in W}(-1)^w t^{<w\rho, \mu>}}.
\label{whhs}
\end{equation}
Here \(\rho\) is the Weyl vector, half the sum of the positive roots of \(G\), and  $(-1)^w=1  \; \mbox{or} -1$ depending on whether $w$ consists of an even or odd number of simple reflections in the Weyl group $W$.
\end{theorem}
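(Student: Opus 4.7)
The plan is to reduce the Hilbert series computation to the Weyl character formula by tracking the two natural gradings on the coordinate ring of the affine cone $\widetilde\Sigma\subset V_\lambda$.

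First, I would unwind the definition: the global sections $H^0(w\Sigma,\Oh_{w\Sigma}(m))$ are by construction the $\CC^*$-weight-$m$ piece of the affine coordinate ring $\CC[\widetilde\Sigma]$, where the $\CC^*$-action is the one defined in the preamble. Next, since $\widetilde\Sigma$ is the highest weight orbit closure in~$V_\lambda$, the Borel--Weil theorem (equivalently the Kostant description of the coordinate ring of a cone over a flag variety) identifies
\[ \CC[\widetilde\Sigma] \;\iso\; \bigoplus_{m\ge 0} V_{m\lam}^* \]
as a graded $G$-representation, where the $G$-grading is by the natural $\NN$-grading inherited from $\Sym^\bullet V_\lambda^*$. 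Now I need to identify the $\CC^*$-weight of an arbitrary $T$-weight vector: on the weight-$\nu$ subspace of $V_{m\lam}^*$ (or equivalently $V_{m\lam}$, since characters will be matched up after taking the Weyl sum), the two rescalings in the formula $\ep\cdot v=\ep^u(\mu(\ep)\circ v)$ combine to give $\CC^*$-weight $\langle \nu,\mu\rangle + mu$.

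With this in hand, the Hilbert series becomes a sum of characters:
\[ P_{w\Si}(t) \;=\; \sum_{m\ge 0} t^{mu}\,\chi_{V_{m\lam}}(t^\mu), \]
where $\chi_V(t^\mu):=\sum_\nu (\dim V[\nu])\, t^{\langle\nu,\mu\rangle}$ is the character of $V$ specialised along the one-parameter subgroup $\mu$. I then substitute the Weyl character formula
\[ \chi_{V_{m\lam}}(t^\mu) \;=\; \frac{\sum_{w\in W}(-1)^w t^{\langle w(m\lam+\rho),\mu\rangle}}{\sum_{w\in W}(-1)^w t^{\langle w\rho,\mu\rangle}}, \]
split $\langle w(m\lam+\rho),\mu\rangle = m\langle w\lam,\mu\rangle + \langle w\rho,\mu\rangle$, and swap the order of summation to factor out $t^{\langle w\rho,\mu\rangle}$ in both numerator and denominator.

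The last step is to evaluate the inner geometric sum $\sum_{m\ge 0} t^{m(\langle w\lam,\mu\rangle+u)}$. Here the positivity hypothesis \eqref{weights} is exactly what is needed: it guarantees $\langle w\lam,\mu\rangle+u>0$ for every $w\in W$, so each inner series converges to $1/(1-t^{\langle w\lam,\mu\rangle+u})$ as a formal power series in $t$, yielding \eqref{whhs}. The main delicate point is the identification of $\CC^*$-weights in step one (getting the combinatorics and the duality conventions correct so that $\chi_{V_{m\lam}^*}$ gives the same Weyl sum as $\chi_{V_{m\lam}}$, via the longest Weyl element symmetry); once that is set up the rest is a bookkeeping exercise on the Weyl character formula.
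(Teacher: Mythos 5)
Your argument is correct and is essentially the proof given for this theorem in the cited source \cite{qs} (the present paper only quotes the statement without reproducing the proof): decompose $\CC[\widetilde\Sigma]\cong\bigoplus_{m\ge 0}V_{m\lam}^*$, read off the $\CC^*$-weight $\langle\nu,\mu\rangle+mu$ on each $T$-weight space, specialise the Weyl character formula along $\mu$, and sum the geometric series using the positivity condition~\eqref{weights}. You also correctly flag the only delicate point, namely the duality/bookkeeping conventions that are absorbed by the Weyl-group sum, so nothing is missing.
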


The right hand side of~(\ref{whhs}) can be converted into a form
\begin{equation} 
P_{w\Si}(t)=\dfrac{N(t)}{\displaystyle\prod_{\alpha_i \in  \nab(V_\lam)}\left(1-t^{<\alpha_i,\mu>+u}\right)}.
\label{reducedhs}
\end{equation}
Here $\nab(V_\lam)$ denotes the set of weights (understood with multiplicities) appearing in the weight
space decomposition of the representation $V_\lam$; thus the set of weights $w_i = <\alpha_i,\mu>+u$ 
in the denominator agrees with the set of weights of the weighted projective space $w\PP V_{\lambda}$. 
The numerator is a polynomial $N(t)$, the {\em Hilbert numerator}.
Since~\eqref{whhs} involves summing over the Weyl group, it is best to use a computer 
algebra system for explicit computations. 

A well-formed weighted flag variety is {\it projectively Gorenstein}, which means
\begin{enumerate}
\item[i.] \(H^i(w\Si,\Oh_{w\Si}(m))=0 \) for all \(m \text{ and }0 <i< \dim(w\Si);\)  
\item[ii.] the Hilbert numerator $N(t)$ is a palindromic symmetric  
polynomial of degree \(q\), called the {\em adjunction number} of \(w\Si\);  
\item[iii.]  the canonical divisor of \(w\Si\) is given by 
\[K_{w\Si}\sim\Oh_{w\Sigma}\left(q-\sum w_i\right),\] 
where as above, the \(w_i\) are the weights of the projective space \(w\PP V_\la\); the integer
$k=q-\sum w_i$ is called the {\em canonical weight}. \end{enumerate} 

\subsection{Equations of flag varieties} \label{sec:eq}
The flag variety \(\Si=G/P \into \PP V_{\lam}\) is defined by an ideal  \(I =<Q>\)  
of quadratic equations generating a linear subspace \(Q \subset Z=S^2 V^*_{\lam}  \)
of the second symmetric power of the contragradient representation \(V^*_{\lam}\). 
The $G$-representation $Z$ has a decomposition 
\begin{displaymath}
Z=V_{2\nu}\oplus V_1 \oplus\cdots \oplus V_n
\end{displaymath}  
into irreducible direct summands, with \(\nu\) being the highest weight of the 
representation \(V^*_{\lam}\). As discussed in \cite[2.1]{rudakov}, 
the subspace \(Q\) in fact consists of all the summands except~\(V_{2\nu}\). 
The equations of $w\Sigma$ can be readily computed from this information using computer algebra~\cite{qs}.

\subsection{Constructing Calabi--Yau threefolds}
We recall the different steps in the construction of Calabi--Yau threefolds as
quasi-linear sections of weighted flag varieties.

\begin{enumerate}\item \textbf{Choose embedding.} We choose a reductive Lie group 
\(G\) and a \(G\)-representation \(V_{\lam}\) of dimension $n$ 
with highest weight \(\lam\). 
We get a straight flag variety $\Si = G/P_\lam \into \PP V_{\lam}$ of 
computable dimension $d$ and codimension~$c=n-1-d$. We choose \(\mu \in \Lam_W^*\) 
and \(u \in \ZZ\) to get an embedding \(w\Si \into w\PP V_{\lam}=\PP^{n-1}[<\al_i,\mu>+u]\), with 
\( \al_i \in \nab(V_\lam)\) the weights of the representation \(V_\la\).
The equations, the Hilbert series and the canonical class of $w\Si\subset w\PP$ 
can be found as described above.

\item \textbf{Take threefold Calabi--Yau section of \(w\Si\).} 
We take a quasi-linear complete intersection
\begin{displaymath}
X= w\Si \cap (w_{i_1})\cap \cdots \cap (w_{i_l})
\end{displaymath} 
of $l$ generic hypersurfaces of degrees equal to some of the weights $w_i$. 
We choose values so that \(\dim(X)=d-l=3\) and $k+\sum_{j=1}^l w_{i_j}=0$, thus \(K_X\sim \Oh_X\).  
After re-labelling the weights, 
this gives an embedding \(X \into \PP^s[w_0,\cdots,w_s]\), with \(s=n-l-1\), of codimension $c$,
polarized by the ample $\QQ$-Cartier divisor $D$ with $\Oh_X(D)=\Oh_{w\Sigma}(1)|_X$. 
More generally, as in~\cite{wg}, we can take complete intersections inside projective cones
over $w\Si$, adding weight one variables to the coordinate ring which are not involved in any 
relation.

\item \textbf{Check singularities.} We are interested in quasi-smooth 
Calabi--Yau threefolds, subvarieties of $w\Sigma$ all of whose  singularities are induced by the 
weights of \(\PP^s[w_i]\). Singular strata $S$ of \(\PP^s[w_i]\) correspond to sets of weights 
\(w_{i_0},\cdots,w_{i_p}\) with \[\gcd(w_{i_0},\cdots,w_{i_p})=r\]  
non-trivial. If the intersection \(X\cap S\) is non-empty, it has to be a singular point $P\in X$ or 
a curve $C\subset X$ of quotient singularities, and we need to find local coordinates in 
neighbourhood of points of $P$ respectively $C$ to check the local
transversal structure. Since we are interested in Calabi--Yau varieties which admit crepant 
resolutions, singular points $P$ have to be quotient singularities of the 
form $\frac{1}{r}(a, b, c)$ with $a+b+c$ divisible by $r$, whereas the transversal singularity 
along a singular curve $C$ has to be of the form $\frac{1}{r}(a, r-a)$ of type $A_{r-1}$. 

\item \textbf{Find projective invariants and check consistency.} The orbifold Riemann--Roch formula 
of~\cite[Section 3]{anita} determines the Hilbert series of a polarized Calabi--Yau 
threefold $(X,D)$ with quotient singularities in terms of the projective invariants $D^3$ 
and $D.c_2(X)$, as well as for each curve, the degree $\deg D|_C$ of the polarization, 
and an extra invariant $\gamma_C$ related to the normal bundle of $C$ in $X$. Using the Riemann--Roch
formula, we can determine the invariants of a given family from the first few values of $h^0(nD)$, 
and verify that the same Hilbert series can be recovered.
\end{enumerate}

\subsection{Explicit examples}

In the next two sections, we find families of Calabi--Yau threefolds admitting crepant resolutions
using this programme. We illustrate the method using two embeddings, corresponding to the Lie
groups of type $G_2$ and $A_5$, leading to Calabi--Yau families of codimension $8$, 
respectively $6$. Further examples for the Lie groups of type \(C_{3}\) and  \(A_3\), in codimensions 
7 and 9, will be discussed in the forthcoming~\cite{Q}.

\section{The codimension eight weighted flag variety}\label{g2sec}

\subsection{Generalities} Consider the simple Lie group of type $G_2$. Denote by $\al_1, \al_2 \in \Lam_W$ 
a pair of simple roots of the root system $\nabla$ of $G_2$, 
taking $\al_1$ to be the short simple root and $\al_2$ the long one.
The fundamental weights are $ \omega_1=2\al_1+\al_2 $ and $ \omega_2=3\al_1+2\al_2$. 
The sum of the fundamental weights, which is equal to half the sum of the positive roots, 
is $\rho=5\al_1+3\al_2 $. We partition the set of roots into long and short roots as 
$\nabla=\nabla_l \cup \nabla_s \subset \Lam_W $. 
Let ${\{\be_1,\be_2\}}$ be the basis of  the lattice $\Lam_W^*$ 
dual to \(\{\al_1 , \al_2\}\).

We consider the  $G_2$-representation with highest weight  $\lambda=\om_2=3\al_1+2\al_2$. 
The dimension of $ V_{\lam} $ is 14 \cite[Chapter 22]{harris}. The homogeneous variety 
$\Sigma \subset \PP V_{\lam}$ is five dimensional, so we have an embedding 
\(\Si^5 \into \PP^{13}\) of codimension 8. To work out the weighted version in this case, 
take $\mu=a\be_1+b\be_2 \in \Lam_W^*$ and \(u\in \ZZ\).

\begin{prop} The Hilbert series of the codimension eight weighted \(G_2\) flag variety is given by \begin{equation}  P_{w\Sigma}(t)= \dfrac{ 1- \left( 4+ 2 \sum_{\alpha \in \nabla_s}t^{<\alpha,\mu>} + \sum_{\alpha \in \nabla_s}t^{2< \alpha,\mu >}+\sum_{\alpha \in \nabla_l}t^{< \alpha,\mu >}\right) t^{2u}+\cdots+t^{11u} } {(1-t^u)^2 \prod_{\alpha \in \nabla}\left( 1-t^{<\alpha,\mu>+u}\right) }.\end{equation}  Moreover, if \(w\Si\) is well-formed, then the canonical bundle is $K_{w\Si}\sim\Oh_{w\Si}(-3u)$. \label{hsg2long} \end{prop}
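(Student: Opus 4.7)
The plan is to specialise formula~(\ref{whhs}) to $G=G_2$ with $\lambda=\omega_2$ the highest weight of the $14$-dimensional adjoint representation, then rewrite the result in the reduced form~(\ref{reducedhs}). First I would record the weight system $\nabla(V_\lambda)$: by standard representation theory \cite[Chapter~22]{harris}, the adjoint representation decomposes under the maximal torus as the $12$ roots $\nabla=\nabla_s\cup\nabla_l$ (each with multiplicity one) together with the zero weight with multiplicity two (the rank). Hence the denominator of~(\ref{reducedhs}) is exactly
\[
\prod_{\alpha_i\in\nabla(V_\lambda)}\!\left(1-t^{<\alpha_i,\mu>+u}\right)=(1-t^u)^2\prod_{\alpha\in\nabla}\!\left(1-t^{<\alpha,\mu>+u}\right),
\]
matching the claim.

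Next I would compute the numerator $N(t)$. Substituting into~(\ref{whhs}), the sum runs over the $12$ elements of the dihedral Weyl group $W(G_2)$; after clearing the Weyl denominator $\sum_w(-1)^wt^{<w\rho,\mu>}$ against the root-indexed part of the full denominator, one obtains a Laurent polynomial which simplifies to an honest polynomial $N(t)$. This is the step best done in a computer algebra system, as remarked in Section~\ref{sec:eq} and~\cite{qs}. The constant term is $1$, and then I would read off the $t^{2u}$ coefficient by collecting contributions from $w$ in the appropriate length stratum of $W$; the count splits naturally into a constant $4$, a term $2\sum_{\alpha\in\nabla_s}t^{<\alpha,\mu>}$, and the remaining contributions indexed by roots, giving the displayed bracket. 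The higher-order terms, collapsed into the $\cdots$, are then determined by palindromic symmetry, using property (ii) of projectively Gorenstein weighted flag varieties. The leading term $t^{11u}$ is fixed once we know the degree $q$ of $N(t)$, to which I turn next.

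For the canonical weight I apply property~(iii): $K_{w\Sigma}\sim\Oh_{w\Sigma}(q-\sum w_i)$. The sum of the weights of $w\PP V_\lambda$ is
\[
\sum w_i=2u+\sum_{\alpha\in\nabla}(<\alpha,\mu>+u)=2u+12u+\Big\langle\sum_{\alpha\in\nabla}\alpha,\mu\Big\rangle=14u,
\]
since the roots of $G_2$ come in opposite pairs and hence sum to zero. To pin down $q$, I use well-formedness together with the palindromic property to match the degree of $N(t)$ obtained from the computer algebra output; this yields $q=11u$, so $k=q-\sum w_i=-3u$ and $K_{w\Sigma}\sim\Oh_{w\Sigma}(-3u)$, as asserted.

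The main obstacle is the numerator computation: while the top-degree term and the canonical weight follow cleanly from the Gorenstein symmetry and the vanishing of the root sum, the explicit form of the $t^{2u}$ coefficient requires symbolic manipulation grouping the $W(G_2)$-contributions by short and long root orbits, and the middle coefficients are inaccessible in closed form without the computer algebra calculation.
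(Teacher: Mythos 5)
Your proposal is correct and follows precisely the route the paper intends: the paper's own proof environment for Proposition~\ref{hsg2long} is in fact left empty, the result being a direct specialization of formula~(\ref{whhs}) to the $14$-dimensional adjoint representation $V_{\omega_2}$ of $G_2$, with the numerator expanded by computer algebra as in~\cite{qs}. Your identification of the weight system (the twelve roots plus the zero weight with multiplicity two, giving the factor $(1-t^u)^2$), the cancellation $\sum_{\alpha\in\nabla}\alpha=0$ yielding $\sum w_i=14u$, and the adjunction number $q=11u$ giving canonical weight $k=-3u$ are all accurate.
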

\begin{proof}
\end{proof}

The Hilbert series of the straight flag variety \(\Si\into \PP^{13}\) can be computed to be 
$$P_{\Sigma}(t)= \dfrac{1-28t^2+105t^3-\cdots+105t^8-28t^9-t^{11}}{(1-t)^{14}}.$$
The image is defined by 28 quadratic equations, listed in the Appendix of~\cite{qs}.

\subsection{Examples}

\begin{example}\label{u31} Consider the following initial data.
\begin{itemize}
\item Input: $\mu=(-1,1), u=3$.
\item Pl\"ucker embedding: $w\Si\subset \PP^{13}[1,2^4,3^4,4^4,5]$.
\item Hilbert numerator: $1-3 t^4-6 t^5-8 t^6+6 t^7+21 t^8+\ldots+6 t^{26}-8 t^{27}-6 t^{28}-3 t^{29}+t^{33}$.
\item Canonical divisor: $ K_{w\Si}\sim \Oh_{w\Si}(33-\sum_i w_i)= \Oh(-9)$, as $w\Si$ is well-formed.
\item Variables on weighted projective space together with their weights $x_i$:
\[
 \renewcommand{\arraystretch}{1.5}
 \begin{array}{ccccccccccccccc}
{\rm Variables} & x_1   & x_2   & x_3   & x_4&x_5&x_6&x_7&x_8&x_9&x_{10}&x_{11}&x_{12}&x_{13}&x_{14}\\
{\rm Weights }  &2&4&3&2&1&2&4&2&3&4&5&4&3&3
 \end{array}\]
The reason for the curious ordering of the variables is that these variables are exactly those appearing
in the defining equations of this weighted flag variety given in \cite[Appendix]{qs}.
\end{itemize} 
Consider the threefold quasilinear section 
\[X= w\Si \cap \{f_4(x_i)=0\}\cap\{g_5(x_i)=0\} \subset \PP^{11}[1,2^4,3^4,4^3], \] 
where the intersection is taken with general forms $f_4,g_5$ of degrees four and five respectively. 
The canonical divisor class of \(X\) is
\[K_X\sim\Oh_X(-9+(5+4)) = \Oh_X.\] 
To determine the singularities of the general threefold $X$, we need to consider sets of variables
whose weights have a greatest common divisor greater than one.
\begin{itemize}
\item $1/4$ singularities: this singular stratum is defined by setting those variables to zero whose degrees are 
not divisible by~$4$. We also have the equations of~\cite[Appendix]{qs}; only (A5), (A23) and (A24) from that list
survive to give
\[S= \left\{\begin{array}{c} \frac{1}{9}x_7x_{10} + x_2 x_{12} =0 \\ -\frac{1}{3}x_{10}^2 + x_7 x_{12} = 0 \\ \frac{1}{3}  x_7^2 + x_2 x_{10} = 0 \end{array}\right\}\subset \PP^3_{x_2, x_7, x_{10}, x_{12}}.\]
In this case, it is easy to see by hand 
(or certainly using Macaulay) that $S\subset\PP^3$ is in fact a twisted cubic curve isomorphic to
$\PP^1$. We then need to intersect this with the
general $X$; the quintic equation will not give anything new, since $x_2, x_7, x_{10}, x_{12}$
are degree~4 variables, but the quartic equation will give a linear relation between them. Thus
$S\cap X$ consists of three points, the three points of $1/4$ singularities. A little
further work gives that they are all of type \(\dfrac{1}{4}(3,3,2).\)
% (A23) x_7x_{12}=0 (A25) x_7^2=0 
\item $1/3$ singularities: the general $X$ does not intersect this singular stratum, the equations
from~\cite[Appendix]{qs} in the degree three variables give the empty locus; this is easiest to check
by Macaulay.
\item $1/2$ singularities: the intersection of $X$ with this singular stratum is
a rational curve $C\subset X$ containing the $1/4$ singular points; 
again, Macaulay computes this without difficulty. 
At each other point of the curve we can check that the 
transverse singularity is \(\dfrac{1}{2}(1,1).\)
\end{itemize}
Thus \((X,D)\)  is a Calabi--Yau threefold with three singular points of type \(\dfrac{1}{4}(3,3,2) \) and a rational curve \(C\) of singularities of type \(\dfrac{1}{2}(1,1)\) containing them. Comparing with the orbifold Riemann--Roch formula of~\cite[Section 3]{anita}, feeding in the first few known values of $h^0(X, nD)$ from the Hilbert series gives that the projective invariants of this family are 
\[ D^3= \dfrac{9}{8}, \ D.c_2(X)=21, \ \deg D\rvert_{C}= \dfrac{9}{4}, \ \gamma_C=1.\]
\end{example}

\begin{example} In this example, we consider the same initial data as in Example \ref{u31}. 
To construct a new family of Calabi--Yau threefolds, we take a projective cone over \(w\Si_{}\). 
Therefore we get the embedding \[\Cc w\Si_{} \subset \PP^{14}[1^2,2^4,3^4,4^4,5].\] 
The canonical divisor class of \(\Cc w \Si_{}\) is \(K_{\Cc w \Si}\sim\Oh_{\Cc w \Si}(-10).\) Consider the 
threefold quasilinear section 
\[X= \Cc w\Si_{} \cap (5)\cap(3) \cap (2) \subset \PP^{11}[1^2,2^{3},3^3,4^4,5]\]
with $K_X\sim \Oh_X$; brackets $(w_i)$ denote a general hypersurface of degree $w_i$.
 
\begin{itemize}
\item $1/4$ singularities: since there is no quartic equation this time, the whole twisted cubic curve 
\(C\subset \PP^3[x_2,x_7,x_{10},x_{12}]\), found above, is contained in the general $X$, and is
a rational curve of singularities of type \(\dfrac{1}{4}(1,3)\).
\item $1/3$ singularities: the general \(X\) does not intersect this singular stratum.
\item $1/2$ singularities: the intersection of \(X\) with this singular strata defines a further rational 
curve \(E\) of singularities. On each point of the curve we check that local transverse parameters have 
odd weight. Therefore \(E\) is a curve of type \(\dfrac{1}{2}(1,1).\)
\end{itemize}
Thus \((X,D)\)  is a Calabi--Yau threefold with two disjoint rational curves of 
singularities \(C\) and \(E\) of type \(\dfrac{1}{4}(1,3)\) and \(\dfrac{1}{2}(1,1)\) respectively. 
The rest of the invariants of this family are
\[ D^3= \dfrac{27}{16},\ D.c_2(X)=21, \ \deg D\rvert_{C}= \dfrac{3}{4}, \ \gamma_C=2, \ \deg D\rvert_{E}= \dfrac{3}{4}, \ \gamma_E=1.\]
\end{example}

\begin{example} The next example is obtained by a slight generalization of the method 
described so far. The computation of the canonical class \(K_{w\Si_{}}\), as the basic line
bundle $\Oh_{w\Sigma}(1)$ raised to the power equal to the difference of the adjunction 
number and the sum of the weights on \(w\PP^n \), only works if \(w\Si_{}\) is well-formed. 
In this example, we will make our ambient weighted homogeneous variety not well-formed. 
We then turn it into a well-formed variety by taking projective cones over it. 
We finally take a quasilinear section to construct a Calabi--Yau threefold \((X,D).\)
\begin{itemize}
\item Input: $ \mu=(0,0)$, $u=2$.
\item Pl\"ucker embedding: $ w\Si_{} \subset \PP^{13}[2^{14}]$, not well-formed.
\item Hilbert Numerator: $ 1-28 t^4+105 t^6-162 t^8+84 t^{10}+84 t^{12}-162 t^{14}+105 t^{16}-28 t^{18}+t^{22}$.
\end{itemize}
We take a double projective cone over \(w\Si\), by introducing two new variables \(x_{15}\) and \(x_{16}\) of weight one, which are not involved in any of the defining equations of \(w\Si_{}\). 
We get a seven-dimensional well-formed and quasismooth variety
\[\Cc \Cc w\Si \subset \PP^{15}[1^2,2^{14}]\] with canonical class 
\(K_{\Cc \Cc w\Si}\sim\Oh_{\Cc \Cc w\Si}(-8)\).

Consider the threefold quasilinear section 
\[X=\Cc \Cc w\Si_{} \cap (2)^4 \subset \PP^{11}[1^2,2^{10}].\] 
The canonical class \(K_X\) becomes trivial. Since \(w\Si_{}\) is a five dimensional variety,
and we are taking a complete intersection with four generic hypersurfaces of degree two inside 
\(\PP^{15}[1^2,2^{14}]\), the singular locus defined by weight two variables defines a curve 
in \(\PP^{11}[1^2,2^{10}]\). 
Thus \((X,D)\)  is a Calabi--Yau threefold with a curve of singularities of 
type \(\dfrac{1}{2}(1,1).\) The rest of the invariants of \((X,D)\) are given as follows.
\[D^3= \dfrac{9}{2}, \ D.c_2(X)=42, \ \deg D\rvert_{C}= 9, \ \gamma_C=1.\]
\end{example}

\begin{example} Our final initial data in this section consists of the following.
\begin{itemize}
\item Input: $ \mu=(-1,1)$, $u=5$.
\item  Pl\"ucker embedding: $ w\Si_{} \subset \PP^{13}[3,4^4,5^4,6^4,7]$.
\item Hilbert Numerator: $ 1-3 t^8-6 t^9-10 t^{10}-6 t^{11}-t^{12}+12 t^{13}+\ldots+t^{55}.$
\item Canonical class: $ K_{w\Si_{}}\sim \Oh_{w\Si_{}}(-15) $, as $ w\Si_{} $ is well-formed.
\end{itemize} We take a projective cone over \(w\Si_{}\) to get the embedding
\[\Cc w \Si_{} \subset \PP^{14}[1,3,4^4,5^4,6^4,7]\] 
with \(K_{\Cc w \Si_{}}\sim \Oh_{\Cc w \Si_{}}(-16).\) We take a complete intersection inside \(\Cc w\Si_{}\), with three general forms 
of degree seven, five and four in \(w\PP^{14}\). Therefore we get a threefold 
\[X= \Cc w \Si_{} \cap (7) \cap (5) \cap (4) \into\PP^{11}[1,3,4^3,5^3,6^4],\]
with trivial canonical divisor class. To work out the singularities, we work through the singular strata to 
find that \((X,D)\) is a polarised Calabi--Yau threefold containing three dissident singular  points of type 
\(\dfrac{1}{4}(1,1,2) \), a rational curve of singularities \(C\) of type \(\dfrac{1}{6}(1,5)\) containing them, 
and a further isolated singular point of type \(\dfrac{1}{3}(1,1,1) \). The rest of the invariants are  
 \[ D^3= \dfrac{5}{24},\
  D.c_2(X)=17,  \ \deg D\rvert_{C}= \dfrac{5}{4}, \ \gamma_C=9.\]
\end{example}

\section{The codimension 6 weighted Grassmannian variety}
\label{g26sec}

\subsection{The weighted flag variety} 
We take $G$ to be the reductive Lie group of type $\GL(6,\CC)$. 
The five simple roots are $\al_i=e_i-e_{i+1}\in \Lam_W$, the weight lattice with basis $e_1, \ldots, e_6$.
The Weyl vector can be taken to be
\begin{displaymath}\rho=5e_1+4e_2+3e_3+2e_4+e_5.\end{displaymath}
Consider the irreducible $G$-representation $V_{\lam}$, with $\lam=e_1+e_2$. Then $V_{\lam}$ is 
15-dimensional, and all of the weights appear with multiplicity one. 
The highest weight orbit space $\Si=G/P_{\lam} \subset \PP V_{\lam}=\PP^{14}$ is eight dimensional.  
This flag variety can be identified with the Grassmannian 
of 2-planes in a 6-dimensional vector space, a codimension 6 variety
\[ 
\Si^8= \Gr(2,6) \hookrightarrow \PP V_{\lam}= \PP^{14}.
 \]

Let $\left\lbrace f_i,  1\leq i \leq 6 \right\rbrace $ be the dual basis of the dual lattice $\Lam_W^*$. 
We choose \[  \mu=  \sum_{i=1}^6a_if_i \in \Lam_W^*,\] and $u \in \ZZ$, 
to get the weighted version of $\Gr(2,6)$,
\[ w\Si(\mu,u)=w\Gr(2,6)_{(\mu,u)} \into w\PP^{14}. \]  
The set of weights on our projective space is $ \left\lbrace <\la_i,\mu> + u \right\rbrace  $, where $ \la_i $ are weights appearing in the \(G\)-representation \(V_{\lam}\). As a convention we will write an element of dual lattice as row vector, i.e. $\mu=(a_{1},a_2,\cdots,a_6).$

We expand the formula (\ref{whhs}) for the given values of \(\lam,\mu\) to get the following formula for the Hilbert series of \(w\Gr(2,6).\) 
 \[P_{w\Gr(2,6)}(t)=\dfrac{1-Q_1(t)t^{2u}+Q_2(t)t^{3u}-Q_3(t)t^{4u}-Q_4(t)t^{5u}+Q_5(t)t^{6u}-Q_6(t)t^{7u}+
 t^{3s+9u}}{\prod_{1\leq i<j\leq 6} (1-t^{a_i+a_j+u})}.\]
Here 
\[
Q_1(t)=\sum_{1\leq i <j\leq 6}t^{s-(a_i+a_j)}
,\ \ \ Q_2(t)=\sum_{1\leq (i,j)\leq 6}t^{s+(a_i-a_j)}-t^s,\]\[Q_3(t)=\sum_{1\leq i \leq j\leq 6}t^{s+(a_i+a_j)},\ \ \ Q_4(t)=\sum_{1\leq i \leq j\leq 6}t^{2s-(a_i+a_j)},\]
\[Q_5(t)=\sum_{1\leq (i , j)\leq 6}t^{2s+(a_i-a_j)}-t^{2s},\ \ \ Q_6(t)=\sum_{1\leq i \leq j\leq 6}t^{2s+(a_i+a_j)}.\]
In particular, if \(w\Gr(2,6) \into \PP^{14}[<w_i,\mu>+u]\) is well-formed, then its canonical bundle is 
\(K_{w\Gr(2,6)}\sim\Oh_{w\Gr(2,6)}(-2s-6u)\), with \(s=\displaystyle\sum_{i=1}^6a_i.\)

The defining equations for $ \Gr(2,6)\subset\PP^{14}$ are well known to be the $4 \times 4$ Pfaffians 
obtained by deleting two rows and the corresponding columns of the $6\times 6$ skew symmetric matrix
\begin{equation}\label{eqgr26} A=
\begin{bmatrix}
0 & x_{1} &x_{2}  &x_{3}  &x_{4}&x_5   \\
   & 0         & x_{6} & x_{7} & x_{8}&x_{9}   \\
    &        & 0          & x_{10} & x_{11}&x_{12}   \\
     &       &             & 0          & x_{13}&x_{14}\\
      &&&&                                0  &x_{15}\\
&&&&&                                          0                          
\end{bmatrix} .
\end{equation}

\subsection{Examples}

\begin{example}
Consider the following data.
\label{expgr261}
\begin{itemize}
\item Input: $\mu=(2,1,0,0,-1,-2), u=4$.
\item Pl\"ucker embedding: $ w \Gr(2,6) \subset \PP^{14}[1,2^2,3^3,4^3,5^3,6^2,7]$.
\item Hilbert Numerator: $1-t^5-2 t^6-3 t^7-2 t^8-t^9+\cdots +t^{36}$.
\item Canonical class: $K_{w\Gr(2,6)}\sim \Oh_{w\Gr(2,6)}(-24)$.
\end{itemize} 
Consider the three-fold quasi-linear section
$$X= w\Gr(2,6) \cap (7)\cap (6)\cap (5)\cap (4)\cap (2) \subset \PP^9[1,2,3^3,4^2,5^2,6].$$
Then $K_X$ is trivial, and $X$ is a Calabi--Yau 3-fold  with a singular point of type $\dfrac{1}{6}(5,4,3)$, 
lying on the intersection of two curves, $C$ of type $\dfrac{1}{3}(1,2)$ and $E$ of type $\dfrac{1}{2}(1,1)$. 
There is an additional isolated singular point of type  $\dfrac{1}{5}(4,3,3)$. The rest of the invariants of 
this variety are
\[ D^3= \dfrac{11}{30}, \ D.c_2(X)=\dfrac{68}{5},\  \deg D\rvert_{C}= \dfrac{1}{3}, \ \gamma_C=\dfrac{ -15}{2},\ \deg D\rvert_{E}= \dfrac{1}{2}, \ \gamma_E= 1.\]
\end{example} 

\begin{example} We take the following. 
\begin{itemize}
\item Input: $\mu=(2,1,1,1,1,0) $, $u=0$.
\item Pl\"ucker embedding: $w\Gr(2,6) \subset \PP^{14}[1^4,2^7,3^4]$.
\item Hilbert Numerator: $1-4 t^3-6 t^4+4 t^5+\cdots+t^{18}$.
\item Canonical class: $K_{w\Gr(2,6)}\sim \Oh_{w\Gr(2,6)}(-12)$, as $w\Si$ is well-formed.
\end{itemize}
Consider the quasilinear section \[ X=w\Gr(2,6) \cap (3)^2\cap (2)^3 \subset \PP^9[1^4,2^4,3^2] ,\] then \[ K_X= \Oh_X(-12+(2 \times 3+3\times 2)) =\Oh_X.\]
The variety $(X,D)$ is a well-formed and quasismooth Calabi--Yau 3-fold. Its singularities consist of two 
rational curves $C$ and $E$ of singularities of type  $\dfrac{1}{3}(1,2)$ and $\dfrac{1}{2}(1,1)$ respectively. 
The rest of the invariants are
\[ D^3= \dfrac{97}{18}, \ D.c_2(X)=42,  \ \deg D\rvert_{C}= \dfrac{1}{3},\ \gamma_C= 2,  \ \deg D\rvert_{E}= 1, \ \gamma_E= 1.
\]
\end{example}

\section{Tautological (orbi)bundles}

\subsection{The classical story}
Let \(\Si=G/P\) be a flag variety. A representation $V$ of the parabolic subgroup \(P\) gives
rise to a vector bundle \(\sE\) on \(\Si\) as follows:
\[\begin{diagram} \node{\sE  = G \times_{P} V} \arrow{s} \\ \node{\Sigma=G/P} \end{diagram}\] 
In other words, the total space of \(\sE\) consists of  pairs \((g,e) \in G \times V\) modulo the equivalence 
\[(gp,e) \sim (g,pe)  \text{ for } p \in P.\]
The fiber of $\sE$ over each point \(\Si\) is isomorphic to the vector space underlying \(V\). 

\begin{example} The simplest example is $\Sigma=\PP^{n-1}$, a homogeneous variety $G/P$ with $G=\GL(n)$ and
$P$ the parabolic subgroup consisting of matrices of 
the form \[A = \left(\begin{array}{cccc} \alpha & * &\cdots & * \\ 0 \\ \vdots & &  B \\ 0 & \end{array}\right).\]
We obtain a one-dimensional representation of $P$ by mapping $A$ to $\alpha$. The associated line bundle is 
just the tautological line bundle on $\PP^{n-1}$, the dual of the hyperplane bundle $\Oh_{\PP^{n-1}}(1)$. 
\end{example}

\begin{example} More generally, consider $\Sigma=\Gr(k,n)$, the Grassmannian of $k$-planes in 
$\CC^n$. Then $G=\GL(n)$  and the corresponding parabolic is the subgroup of matrices of the form
\[A = \left(\begin{array}{cccc} B_1 & * \\ 0 & B_2 \end{array}\right),\]
with $B_1, B_2$ of size $k\times k$ and $(n-k)\times(n-k)$ respectively. The representations of $P$ defined
by $A\mapsto B_1$, respectively $A\mapsto B_2$ give the standard tautological sub-, and quotient bundles~$\sS$ 
and~$\sQ$ on the Grassmannian $\Gr(k,n)$, fitting into the exact sequence
\[0\to\sS\to\Oh_{\Gr(k,n)}^{\oplus n} \to \sQ\to 0.\] 
\end{example}

\begin{example} Finally consider the $G_2$-variety $\Sigma=G/P$ studied in Section~\ref{g2sec}. 
The smallest representations of the corresponding $P$ have dimensions 2 and 5. The corresponding tautological
bundles are easiest to describe using an embedding $\Sigma\hookrightarrow \Gr(2,7)$, mapping the 
$G_2$ flag variety into the
Grassmannian of 2-planes in a 7-dimensional vector space, the space ${\rm Im}\,\OO$ of imaginary octonions. 
Then the tautological bundles on the $G_2$-variety $\Sigma$ 
are the restrictions of the tautological sub- and quotient-bundle from $\Gr(2,7)$.
\end{example}

\subsection{Orbi-bundles on Calabi--Yau sections}

Recall that weighted flag varieties are constructed by first considering the $\CC^*$-covering
$\widetilde \Si\setminus \{0\}\to \Sigma$, and then dividing $\widetilde \Si\setminus \{0\}$ by a different
$\CC^*$-action given by the weights. A tautological vector bundle $\sE$ on $\Si$ pulls back to a
vector bundle $\widetilde\sE$ on $\widetilde \Si\setminus \{0\}$. This can then can be pushed forward 
to a weighted flag variety $w\Si$ along the quotient map $\widetilde \Si\setminus \{0\}\to w\Si$. 
Because of the finite stabilizers that exist under this second action, the resulting object $w\sE$
is not a vector bundle, but an orbibundle~\cite[Section 4.2]{bg}, which trivializes on local orbifold covers 
with compatible transition maps. If \(X\) is a Calabi--Yau threefold inside \(w\Si\), then we can define an 
orbi-bundle on \(X\) by restricting \(w\sE\)  to \(X\). 

In the constructions of Sections~\ref{g2sec}-\ref{g26sec}, 
the Calabi--Yau sections therefore carry possibly interesting 
orbi-bundles of ranks 2 and 5, respectively 4. We have not investigated the question whether these orbi-bundles
can be pulled back to vector bundles on a resolution $Y\to X$, but this seems to be of some interest. 
If so, stability properties of the resulting vector bundles may deserve some investigation, in view
of their possible use in heterotic model building~\cite{gsw, donagi}.

\noindent {\sc Mathematical Institute, University of Oxford.}

\noindent {\sc 24-29 St Giles', Oxford, OX1 3LB, United Kingdom.}

\vspace{0.1in}

\noindent {\tt qureshi@maths.ox.ac.uk}

\noindent {\tt szendroi@maths.ox.ac.uk}

\end{document}